\documentclass[11pt]{amsart}
\usepackage[T1]{fontenc}
\usepackage{gfsartemisia-euler}
\usepackage{tikz}
\usetikzlibrary{calc}
\usepackage{graphicx}
\usepackage{setspace}
\usepackage{marginnote}
\usepackage[top=3.5cm, bottom=2.5cm, outer=2.5cm, inner=2.5cm]{geometry}
\usepackage{hyperref}
\hypersetup{linkcolor=red,
citecolor=black
}

\theoremstyle{plain}
\newtheorem{theorem}{Theorem}[section]
\newtheorem{theorem*}{Theorem}

\newtheorem{lemma}[theorem]{Lemma}
\newtheorem{proposition}[theorem]{Proposition}

\theoremstyle{definition}
\newtheorem{definition}[theorem]{Definition}

\newtheorem{remark}[theorem]{Remark}
\newtheorem{example}{Example}[section]

\newcommand{\rac}{{\mathsf{root}}}

\newcommand{\rt}{\emptyset}

\newcommand{\p}{\partial}
\newcommand{\EE}{{\mathbb{E}}}
\newcommand{\NN}{{\mathbb{N}}}
\newcommand{\ZZ}{{\mathbb{Z}}}
\newcommand{\PP}{{\mathbb{P}}}
\newcommand{\pr}[1]{\PP\left[\mspace{1mu} #1\mspace{1mu}\right]}
\newcommand{\ex}[1]{\EE\left[\mspace{1mu} #1\mspace{1mu}\right]}
\newcommand{\fword}[1]{{\emph{#1}}}

\numberwithin{equation}{section}

\title{Amenability of trees}
\author{B. Forghani}
\address[B. Forghani]{Department of Mathematics, University of Connecticut, Connecticut, USA. }
\email{behrang.forghani@uconn.edu}
\author{K. Mallahi-Karai}
\address[K. Mallahi-Karai]{Mathematics Department, Jacobs University of Bremen, Germany}
\email{kmallahika@jacobs-university.de}
\dedicatory{Dedicated to Wolfgang Woess on the occasion of his $60$th birthday}

\begin{document}
\maketitle
\begin{abstract}
We will give a criterion for the amenability of arbitrary locally finite trees. The criterion is based
on the trimming operator which is defined on the space of trees. As an application, we obtain a necessary and
sufficient condition for that amenability of Galton-Watson trees.

\end{abstract}
\section*{Introduction}
The notion of amenability for groups emerged out of von Neumann's effort \cite{Neumann} in 1929 to
find the underlying reason for Hausdorff's paradox. It is his observation that once the dimension of a Euclidean space $E$ exceeds two, the group of isometries of $E$ will contain a copy of the free group on two generators and hence fails to be amenable. This copy of the free group can then be used to carry out various paradoxical decompositions which are analogous to Hausdorff's original result.

Since its inception, the theory of amenable groups has been explored and the notion of amenability has been extended to a broad class of algebraic objects. Zimmer \cite{Zimmer78} found that a non-amenable group may have actions that share many properties of the actions of amenable groups and thus initiated the theory of amenable group actions.

Amenability of a discrete group can also be formulated in terms of its Cayley graph. Before giving this definition, let us recall the notion of the Cayley graph for a ( finitely generated) group. Recall that if $G$ is a group generated by a (symmetric) set $\Sigma$ of generators, then Cayley graph
$ \Gamma= \Gamma(G,\Sigma)$ is an undirected graph with the underlying set of $G$ as its vertex set in which vertices
$g_1, g_2 \in G$ form an edge when $g_1^{-1}g_2 \in \Sigma$. For instance, one can readily see that the Cayley graph of a free group on $k$ generators with respect to the standard generating set is isomorphic to the (unique) $2k$-regular tree. For a subset $A$ of vertices of $ \Gamma$, the boundary $ \partial A$, by definition, consists of those vertices in $A$  that have a neighbor in $V( \Gamma) \setminus A$. A family
$A_n \subseteq G$ is then called a F{\o}lner family, if $ | \partial A_n|/|A_n| \to 0$, as $n \to \infty$.
It is a classical result that a finitely generated group $G$ is amenable iff such a F{\o}lner family exists, see \cite{Folner55}. This definition lends itself to using many other methods to establish the amenability of a group. For instance, in various works by Bartholdi, Kaimanovich, Nekrashevych, Amir, Angel, and Virag (\cite{Kaimanovich2005},
\cite{BKN}, \cite{AAV}) random walks have been used to prove the amenability of several self-similar groups .
Many definitions and statements about amenable groups carry over almost verbatim to amenable graphs.
For instance, Gerl \cite{Gerl87} showed that a connected graph is amenable if and only
if the spectral radius of simple random walk on the graph is strictly less than 1. This result is a generalization of Kesten's result for amenable groups \cite{Kesten59}.

Note that the Cayley graph of a group $G$ is homogeneous, i.e., its automorphism group (containing $G$ as the subgroup of ``internal symmetries'') acts transitively on the vertex set of $G$. In particular, the only trees that can be realized as Cayley graphs are those of free products of cyclic group, which, except in trivial cases, are non-amenable.
In this note, we will take up the question of characterizing amenability for arbitrary locally finite trees. Our point of departure is a group of results proved by Gerl and Woess who investigated this property for trees that do not have degree-one vertices.
Recall that a branch is a vertex with degree at least three.
It is clear that a graph which contains arbitrarily long paths without branches is amenable.
Gerl \cite{Gerl86} proved a tree without any leaves with uniformly bounded degrees is amenable if and only if there are arbitrarily long paths without any branch as induced subgraphs. Later, Woess \cite[p.~114]{Woess2000} improved this result by dropping the uniform finiteness condition. (see Theorem~\ref{thesis}). One can easily see that the assumption that $T$ has not vertices of degree one cannot be dropped (see Example~\ref{amenable trees with leaves}).

Our first theorem extends this characterization to arbitrary trees and, \fword{en passant}, also supplies a rather elementary and ``probability-free'' proof of that result too. In order to state the theorem, we will need to introduce some new terminology. Our central new concept is the {\it trimming} operator
$\Theta$  defined on the space of countable trees. Intuitively, trimming a tree amounts to removing all vertices of degree $1$. Hence, the set of fixed points of this operator are precisely the trees without leaves (but see Example \ref{trimmed indefinitely}). Using the trimming operator, we will define \emph{inessential subtrees} of a trees which are, roughly speaking, finite subtrees hanging from a vertex of the main tree. Next to long paths, inessential subtrees can provide another source of F{\o}lner sets for infinite trees. Our theorem roughly says that these are the only underlying reasons for the amenability of a tree:

\begin{theorem*}\label{main}
Let $T$ be an infinite tree. Then $T$ is amenable if and only if $T$ contains arbitrarily large inessential trees or for some $k \ge 1$, $\Theta^k(T)$ has arbitrarily long paths. Moreover, the former is always the case if $T$ can be trimmed indefinitely.
\end{theorem*}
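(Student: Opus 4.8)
The plan is to treat the three assertions in turn, using the Gerl--Woess theorem (Theorem~\ref{thesis}) for the leafless case together with a little structure theory of the trimming operator; no probability is needed. Write $\ell(v)=\sup\{k\ge 0:v\in\Theta^k(T)\}$ and $T'=\bigcap_{k\ge 0}\Theta^k(T)$, and for a vertex $v$ let $S_v=\{v\}\cup\bigcup\{C:C\text{ a finite component of }T\setminus\{v\}\}$ be the largest inessential tree at $v$, so that ``$T$ has arbitrarily large inessential trees'' means $\sup_v|S_v|=\infty$. The two ``sufficiency'' implications are quick F{\o}lner constructions. If $\sup_v|S_v|=\infty$, take $A=S_v$ with $|S_v|$ large: the only vertex of $S_v$ having a neighbour outside $S_v$ is $v$ itself, so $|\partial A|=1$ while $|A|\to\infty$. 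If some $\Theta^k(T)$ has arbitrarily long paths without branches, fix such a path $v_0-v_1-\cdots-v_m$; each interior $v_i$ has degree $2$ in $\Theta^k(T)$, and any $T$-neighbour $u$ of $v_i$ with $u\notin\Theta^k(T)$ opens onto a \emph{finite} component of $T\setminus\{v_i\}$ (were it infinite, it would contain an infinite ray, and a short induction shows $v_i\in\Theta^k(T)$ would then force $u\in\Theta^k(T)$). Taking $A=\bigcup_{i=1}^{m-1}(\{v_i\}\cup(\text{those finite components}))$ one gets $\partial_T A=\{v_1,v_{m-1}\}$, hence $|\partial_T A|/|A|\le 2/(m-1)\to 0$.

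Next come the structure lemmas. Since $T$ is locally finite, $T'$ is a fixed point of $\Theta$ (if $v\in T'$ then $\deg_{\Theta^k(T)}(v)\ge 2$ for all $k$, and this stabilises as $v$ has finitely many neighbours, so $\deg_{T'}(v)\ge 2$); thus $T'$ is leafless or empty. A bi-infinite-path argument sharpens this to: $v\in T'$ iff $v$ has at least two neighbours $u$ with the component of $T\setminus\{v\}$ on $u$'s side infinite. Hence, if $T'\ne\emptyset$ every component of $T\setminus V(T')$ is a finite tree attached to $T'$ along a single edge, whereas if $T'=\emptyset$ then $T$ is a single ray with finite trees hanging off it. The quantitative input is the \emph{length lemma}: if $\ell(w)<\infty$ and $w$ lies in a finite component $C$ of $T\setminus\{x\}$ for a neighbour $x$ of $w$, then $|C|\ge\ell(w)+1$; one proves this by deleting all of $T$ outside $C$ and gluing at $x$ an ``immortal'' vertex (one carrying two infinite rays), noting that monotonicity of $\Theta$ under passing to subgraphs forces this only to increase $\ell(w)$, while on the modified tree $\ell(w)$ equals the height below $w$ of the finite tree $C$, which is at most $|C|-1$. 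The ``moreover'' follows: if $T$ can be trimmed indefinitely then $\Theta^k(T)$ has a leaf for every $k$, giving $v_k$ with $\ell(v_k)=k$; as $v_k\notin T'$ it has a unique infinite direction, towards some $a_k$, and the component of $T\setminus\{a_k\}$ containing $v_k$ is a finite tree of size $\ge k+1$ --- an inessential tree of size $\ge k+1$.

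For the remaining implication I argue contrapositively: assume $T$ has neither arbitrarily large inessential trees nor any $k$ with $\Theta^k(T)$ having arbitrarily long branch-free paths, and bound the Cheeger constant of $T$ from below. By the contrapositive of the ``moreover'', $T'\ne\emptyset$, so $T'$ is an infinite leafless tree; and if $M$ bounds the sizes of all inessential trees, the length lemma gives $\ell(w)\le M-1$ for every $w\notin T'$, so $\Theta^M(T)=T'$. Since $\Theta^M(T)$ has no long branch-free paths, Theorem~\ref{thesis} makes $T'$ non-amenable: there is $c>0$ with $|\partial_{T'}B|\ge c|B|$ for every finite $B\subseteq V(T')$. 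Now let $A\subseteq V(T)$ be finite and put $B=\{p\in T':S_p\cap A\ne\emptyset\}$. Every vertex of $A$ lies in some $S_p$ with $p\in T'$ (a vertex $v\in T'$ lies in $S_v$; a vertex outside $T'$ lies in a finite component of $T\setminus V(T')$, which is also a finite component of $T\setminus\{p\}$ for the $p\in T'$ it hangs from), and the $S_p$, $p\in T'$, are pairwise disjoint of size $\le M$, so $|A|\le M|B|$. With $A'=A\cap V(T')\subseteq B$ one checks $\partial_{T'}A'\subseteq\partial_T A$, so $|\partial_T A|\ge c|A'|\ge c|B_2|$, where $B_2=\{p\in B:S_p\subseteq A\}\subseteq A'$; and each $p\in B_1:=B\setminus B_2$ contributes a distinct vertex of $\partial_T A$, since $S_p$ is connected and meets both $A$ and its complement, so $|\partial_T A|\ge|B_1|$. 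Adding, $(1+c^{-1})|\partial_T A|\ge|B_1|+|B_2|=|B|\ge|A|/M$, whence $|\partial_T A|\ge \frac{c}{M(1+c)}|A|$ and $T$ is non-amenable.

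The main obstacle is this last piece of bookkeeping rather than any isolated hard fact. The naive comparison of F{\o}lner sets of $T$ and $T'$ fails because a single vertex of $T'$ can carry many inessential subtrees --- $T$ need not have bounded degree --- so blindly fattening a F{\o}lner set of $T'$ by inessential pieces could destroy the isoperimetric ratio; what rescues the argument, and must be used, is that the hypothesis bounds the \emph{total} size $|S_v|$ of the inessential cluster at each vertex, and the decomposition above is arranged precisely so that each vertex of $A$ is charged to a vertex of $T'$ whose entire inessential cluster is accounted for. A minor but genuine point is pinning down ``$T$ can be trimmed indefinitely'': it should mean that $\Theta$ never reaches a fixed point (equivalently, $\Theta^k(T)$ has a leaf for every $k$), which is strictly weaker than $\bigcap_k\Theta^k(T)=\emptyset$, and it is the length lemma that converts it into arbitrarily large inessential trees.
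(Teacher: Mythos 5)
Your proof is correct, and its overall architecture is the one the paper uses: F{\o}lner sets come either from inessential trees or from long branch-free paths, the ``moreover'' comes from showing each trimming level adds at least one vertex to an inessential tree, and the converse reduces to the leafless core and invokes Theorem~\ref{thesis}. Within that shared skeleton you make three substitutions worth noting. (i) For the long-path direction you fatten an interior segment of a branch-free path of $\Theta^k(T)$ by the finite components hanging off it and exhibit a F{\o}lner set in $T$ directly, using the bi-infinite-path characterization of $\bigcap_k\Theta^k(T)$ to see those components are finite; the paper instead runs an induction on $k$ through Lemma~\ref{reduce} (amenability of $\Theta(T)$ implies that of $T$). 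Both work; yours avoids an induction, the paper's lemma is reusable. (ii) Your ``length lemma'' ($|C|\ge\ell(w)+1$ for a finite pendant component containing $w$) plays exactly the role of the paper's Lemma~\ref{add} together with Proposition~\ref{crit}; the paper's version is more direct, since it grows an inessential subtree of $\Theta(T)$ into a strictly larger one of $T$ without any grafting construction. (iii) In the converse, the paper compares a connected $A$ with $A'=A\cap V(T')$, asserting $|A'|\ge|A|/R$ and that the boundary of $A'$ in $T'$ has ``the same cardinality'' as that of $A$ in $T$; as you observe, only the inclusion $\partial_{T'}A'\subseteq\partial_T A$ is true or needed, and the paper's ratio comparison silently assumes $A'\neq\emptyset$. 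Your decomposition $V(T)=\bigsqcup_{p\in T'}S_p$ with the split of $B$ into clusters fully inside $A$ versus clusters straddling $\partial A$ repairs both points and yields an explicit lower bound $c/(M(1+c))$ for the Cheeger constant valid for arbitrary finite $A$; this is a genuine tightening of the paper's sketch rather than a different method. Your closing remark about the failure of naive fattening when a vertex of $T'$ carries many pendant trees correctly identifies why the bound must be phrased in terms of the total cluster size $|S_v|$, which is exactly what the union-of-inessential-trees proposition in Section~\ref{trim} licenses.
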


This theorem can then be applied in a probabilistic setting. There are many models for random trees. The oldest and perhaps the most well-known one is the family of Galton-Watson trees.
In this context, we have the following theorem:

\begin{theorem*}\label{galton}
The Galton-Watson tree $ {\mathcal T}$ associated to the probability distribution $(p_i)_{i \ge 0}$ is almost surely amenable, if and only if
$\pr{X_I \le 1}=p_0+p_1>0$.
\end{theorem*}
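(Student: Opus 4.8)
The plan is to separate the finite and the infinite regimes; note first that $\mathcal T$ is almost surely locally finite, so that Theorem~\ref{main} applies to it. If $p_0+p_1=0$ the offspring mean is at least $2$, so the process is supercritical; hence, whenever the process is \emph{not} supercritical we automatically have $p_0+p_1>0$, and $\mathcal T$ is then almost surely amenable (it is almost surely finite, or --- in the case $p_1=1$ --- a single ray, and finite graphs and rays are amenable). It therefore suffices to treat the supercritical case on the event of survival (on the extinction event $\mathcal T$ is finite, hence amenable). For that I would use the classical description of the \emph{core} $\mathcal T^{*}$ of $\mathcal T$, the subtree spanned by the vertices having infinitely many descendants: conditioned on survival, $\mathcal T^{*}$ is again a Galton--Watson tree, with offspring generating function $\hat f(s)=(f(q+(1-q)s)-q)/(1-q)$, where $f$ is the offspring generating function of $\mathcal T$ and $q=\pr{\mathcal T\text{ is finite}}$; in particular $\hat p_0=0$ and $\hat p_1=f'(q)$.

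For the ``if'' direction I assume $p_0+p_1>0$ and work on the survival event, with $q=0$ and $\mathcal T^{*}=\mathcal T$ if $p_0=0$, and $0<q<1$ if $p_0>0$. First I would check that $\hat p_1=f'(q)\in(0,1)$: the bound $f'(q)<1$ is the standard property of the smaller fixed point of a supercritical offspring generating function, and $f'(q)=\sum_{i\ge 1}i\,p_i\,q^{i-1}>0$ because in the supercritical case $p_0<1$ forces $p_i>0$ for some $i\ge 1$, while $q>0$ if $p_0>0$ and $p_1>0$ if $p_0=0$. Next, a Borel--Cantelli argument shows that $\mathcal T^{*}$ almost surely contains arbitrarily long paths without branches: its generation sizes $\hat Z_k$ are non-decreasing and, since $\hat p_1<1$, tend to $\infty$ almost surely; the probability that the forced downward path through a fixed vertex of generation $k$ runs for at least $n$ further steps equals $\hat p_1^{\,n}$, and these events are independent over the $\hat Z_k$ vertices of generation $k$, so $\pr{\text{no branch-free path of length }\ge n\text{ starts in generation }k}=\ex{(1-\hat p_1^{\,n})^{\hat Z_k}}\to 0$ as $k\to\infty$; thus for each fixed $n$ such a path exists almost surely, and intersecting over $n\in\NN$ gives the claim. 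Finally I feed this into Theorem~\ref{main}: since every vertex of $\mathcal T^{*}$ keeps, at every trimming stage, its parent (unless it equals $\rt$) and at least one child lying in $\mathcal T^{*}$, we have $\mathcal T^{*}\subseteq\Theta^{k}(\mathcal T)$ for all $k$; and either the sequence $\mathcal T,\ \Theta(\mathcal T),\ \Theta^{2}(\mathcal T),\dots$ never stabilises --- in which case $\mathcal T$ can be trimmed indefinitely and the ``moreover'' clause of Theorem~\ref{main} produces arbitrarily large inessential trees --- or it stabilises at a leafless tree, which one checks must be exactly $\mathcal T^{*}$ (a leafless subtree of $\mathcal T$ can contain no vertex of finite descent, since the finite subtree it would span below such a vertex has a leaf), so that $\Theta^{k}(\mathcal T)=\mathcal T^{*}$ has arbitrarily long paths. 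In both cases Theorem~\ref{main} gives that $\mathcal T$ is amenable.

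For the ``only if'' direction I argue by contraposition: suppose $p_0=p_1=0$. Then $\mathcal T$ is deterministically infinite and leafless, and every non-root vertex has degree at least $3$, so $\Theta(\mathcal T)=\mathcal T$; hence $\Theta^{k}(\mathcal T)=\mathcal T$ for every $k$ contains no arbitrarily long paths without branches, and since every subtree of $\mathcal T$ hanging from a vertex is infinite, $\mathcal T$ carries no nontrivial inessential subtrees either. By Theorem~\ref{main}, $\mathcal T$ is then non-amenable with probability one, in particular not almost surely amenable, which is the required contrapositive.

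I expect the main obstacle to be the bookkeeping in the ``if'' direction: identifying the stable value of the trimming iteration with the core $\mathcal T^{*}$, and making the Borel--Cantelli step fully rigorous (the almost-sure divergence $\hat Z_k\to\infty$ and the conditional independence of the subtrees rooted at generation $k$). The reduced-tree description of $\mathcal T^{*}$, combined with the ``moreover'' clause of Theorem~\ref{main}, is precisely what allows one to avoid the more delicate question of whether the finite bushes hanging from $\mathcal T^{*}$ have unbounded height.
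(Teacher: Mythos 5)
Your proof is correct in substance but takes a genuinely different route from the paper's, most visibly in the ``if'' direction. The paper never invokes the reduced-tree (Harris) decomposition: it splits into the case $p_0=0$, $0<p_1<1$, where it shows directly (via $W_n\to\infty$ and a second-moment-free Borel--Cantelli/limit argument over the i.i.d.\ subtrees at generation $n$) that $\mathcal T$ itself almost surely contains hanging copies of arbitrarily long paths, which are already F{\o}lner sets; and the case $p_0>0$, where it runs a dichotomy on the generation sizes --- either $W_n>r$ and with high probability some subtree at generation $n$ is a full $s$-ary tree of depth $d$ that dies out exactly at depth $d$ (a large inessential bush), or $W_n\le r$ and the initial segment $\mathcal T_{n-1}$ has at least $n$ vertices but boundary at most $r$. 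In both cases it exhibits explicit sets of isoperimetric ratio at most $1/d$, so it only needs the easy half of Theorem~\ref{main} (in fact nothing beyond the definition of amenability). You instead condition on survival, pass to the core $\mathcal T^*$ with $\hat p_0=0$ and $\hat p_1=f'(q)\in(0,1)$, find long branch-free paths there, and then route everything through the full statement of Theorem~\ref{main} by identifying the stabilized trimming iterate with $\mathcal T^*$ (or, if trimming never stabilizes, by the ``moreover'' clause). This unifies the two cases elegantly and, as you note, sidesteps the question of whether the finite bushes have unbounded height; the price is reliance on the classical conditioned-on-survival description of supercritical Galton--Watson trees and on the harder direction of the structural machinery of Section~\ref{trim}. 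Two small points to tidy: your induction ``every vertex of $\mathcal T^*$ keeps its parent and a child in $\mathcal T^*$'' can fail at the root when $\rt$ has exactly one child of infinite descent (the root may eventually be trimmed away), so the correct containment is $\mathcal T^*\setminus\{\rt\}\subseteq\Theta^k(\mathcal T)$, which suffices since the long branch-free paths can be taken away from the root; and in the ``only if'' direction you (like the paper) should note that the root may have degree $2$, so the nonexistence of branch-free paths of length $\ge 2$, rather than of any degree-$2$ vertex, is what you actually use.
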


This paper is organized as follows. In Section \ref{prem}, we will define the graph-theoretical terminology that is freely used throughout the paper. Section \ref{amen} is devoted to stating general facts about amenability. A simple proof of Theorem \ref{main} in the special case of trees without leaves is also given in this section. In Section \ref{trim},
the trimming operator and inessential trees are introduced and studied. Finally, in Section \ref{gw}, we show an application of Theorem \ref{main} in the context of Galton-Watson trees.

\section*{acknowledgment} {Authors would like to thank V. Kaimanovich for his useful comments on the first draft of this article.
We would like to thank W. Woess for his comments about the history of amenability of trees and pointing out reference \cite{Woess2000}.

\section{Preliminaries}\label{prem}
In this section, we will define the basic graph-theoretic terminology and set the notations used in this paper. A (undirected) graph $G$ consists of a non-empty set $V(G)$ called the vertices and
a family of $2$-element subsets of $V(G)$, called the edges of $G$, and denoted by $E(G)$. For brevity, the edge $\{ u, v \}$ with vertices (also called endpoints) $u$ and $v$ will be denoted by $uv$, hence
$uv=vu$. The set of neighbors of a vertex $v$, denoted by $N(v)$, consists of the vertices $u \in V(G)$ with $uv \in E(G)$. Correspondingly, for a subset $A \subseteq V(G)$, we set $N(A)= \bigcup_{v \in A} N(a)$. The degree of a vertex is given by $\deg v= | N(v)|$, where $|X|$ denotes the cardinality of set $X$. All the graphs considered in this paper are assumed to be locally finite, that is, $\deg v < \infty$ for all $v \in V(G)$.
A leaf is a vertex of degree $1$. The set of leaves of a graph $G$ is denoted by $L(G)$.
For two leaves $u,v \in G$, we write $u \sim v$ if $N(u)=N(v)$. A branch is a vertex with degree strictly more than 2.
For a non-empty subset $A \subseteq V(G)$, the induced subgraph $G[A]$ is the graph with
$V(G[A])=A$ and $E(G[A])=\{ uv \in E(G): u,v \in A  \}$. Similarly, for a non-empty subset
$R$ of edges of $G$, the edge-induced subgraph $G(R)$ has $R$ as the set of edges and the set of endpoints of
$R$ as the vertices.
A path of length $n$ between two vertices $u$ and $v$ is a finite sequence of vertices $x_0=u, x_1, \cdots, x_n=v$ such that $x_i$ and $x_{i+1}$ are neighbors for $i=0,1,\cdots n-1$.
 A tree $T$ is a graph such that for any two vertices $u, v \in V(T)$, there exists a unique path joining
$u$ to $v$. This path (viewed as in induced subgraph of $T$) will be denoted by $[uv]$. We say that a graph $G$ contains arbitrarily long paths without branches if for any $n$, there exist vertices
$v_1, \dots, v_n \in V(G)$ such that $v_i$ is connected to $v_{i+1}$ for $1 \le i \le n$ and for $  1 \le i \le n$, the degree of $v_i$ in $G$ is exactly $2$.

\section{Amenability of Graphs}\label{amen}
In this section, we recall the definition of amenability for graphs and groups. Moreover, we will give a complete characterization of amenable trees without leaves.
\begin{definition}
Let $A$ be a subgraph of  graph $G$. The {\it boundary} of $A$ consists of those vertices of $A$ which are
connected to at least one vertex outside $A$. We will denoted the boundary by $\partial A$. Hence,
$$
\partial A=\{v\in V(A)\ :\ vu\in E(G),\ \mbox{for some } u\in V(A^c)\}\;,
$$
where $|A|=|V(A)|$.
\end{definition}

We will also need the following definition.

\begin{definition}
Define the {\it isoperimetric number} or {\it Cheeger constant} of a graph $G$ as follows
$$
i(G)=\inf \left\{ \frac{|\partial A|}{|A|}\ :\ A \mbox{ is a non-empty and finite subgraph of } G \right\}.
$$
\end{definition}
The graph $G$ is called {\it amenable} if $i(G)=0$.

Equivalently, the graph $G$ is amenable if and only if there is a sequence of finite subgraphs $(A_n)_{n\geq1}$ of $G$
such that
$$
\lim_{n\to\infty}\frac{|\partial A_n|}{|A_n|}=0.
$$
Such a sequence $(A_n)_{n\geq1}$ witnessing the amenability is called a F{\o}lner set.
Let us make two remarks about F{\o}lner set: first, one can always exchange a F{\o}lner set with one
which consists of finite connected graphs. This follows from the following easy lemma:
\begin{lemma}
If $A$ is a finite subgraph of $G$, such that $|\p A|\leq\epsilon |A|$, then there is connected subgraph $B$ of $A$, such that
$|\p B|\leq\epsilon |B|$.
\end{lemma}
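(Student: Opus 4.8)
The plan is to decompose $A$ into its connected components and run a simple averaging argument. First I would note that both $|A|$ and $|\partial A|$ depend only on the vertex set $V(A)$ --- the boundary being measured through the edges of $G$ --- so there is no harm in replacing $A$ by the induced subgraph $G[V(A)]$ and assuming henceforth that $A$ is an induced subgraph of $G$. Let $A_1,\dots,A_m$ be the connected components of $A$, so that $V(A)=V(A_1)\sqcup\cdots\sqcup V(A_m)$.

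The one point that needs care is the following: since $A$ is induced, no edge of $G$ joins a vertex of $A_i$ to a vertex of $A_j$ for $i\neq j$, for such an edge would belong to $E(A)$ and would merge the two components. Consequently a vertex of $A_i$ has a neighbor outside $V(A_i)$ exactly when it has a neighbor outside $V(A)$; that is, $\partial A_i=(\partial A)\cap V(A_i)$, and hence
$$
|\partial A|=\sum_{i=1}^{m}|\partial A_i|,\qquad |A|=\sum_{i=1}^{m}|A_i|.
$$

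Now I would argue by contradiction: were $|\partial A_i|>\epsilon|A_i|$ to hold for every $i$, summing over $i$ would give $|\partial A|>\epsilon|A|$, against the hypothesis. So some component $A_i$ satisfies $|\partial A_i|\leq\epsilon|A_i|$, and $B:=A_i$ is connected and does the job. I do not foresee any real obstacle here; the only subtlety is keeping straight the boundary of $A_i$ inside $A$ versus inside $G$, which is precisely why one first passes to the induced subgraph --- after that, inter-component edges of $G$ cannot inflate any $|\partial A_i|$, and the counting is immediate.
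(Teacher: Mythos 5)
Your proof is correct and follows essentially the same route as the paper's: decompose $A$ into connected components, use additivity of $|\cdot|$ and $|\partial\cdot|$ over components, and conclude by averaging. Your extra step of first passing to the induced subgraph $G[V(A)]$ is a welcome refinement, since it is exactly what justifies the identity $|\partial A|=\sum_i|\partial A_i|$ that the paper asserts from disjointness alone.
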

\begin{proof}
Let $A_1,A_2,\cdots, A_n$ are finite connected components of $A$. Since $A_i$ are pairwise disjoint, we have $|\partial A|=|\partial A_1|+\cdot+|\partial A_n|$. Therefore,
$|\partial A_1|+\cdots+|\partial A_n|=|\partial A|\leq\epsilon |A|=\epsilon|A|_1+\cdots+ \epsilon|A_n|$. Hence, there exists $1 \le i \le n$ such that
$|\p A_i|\leq\epsilon |A_i|$
\end{proof}

Second, the F{\o}lner set can be chosen to exhaust $V(G)$:

\begin{proposition}
Let $(A_n)_{n\geq1}$ be a F{\o}lner sequence for the graph $G$.
Then there exists F{\o}lner sequence $(A'_n)_{n\geq1}$ such that $\bigcup_{n\geq0}A'_n= G$.
\end{proposition}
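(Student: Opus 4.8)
The plan is to fatten a thin subsequence of $(A_n)$ by a family of finite ``patches'' that together exhaust $V(G)$, exploiting the principle that adjoining a bounded set of vertices barely changes the F{\o}lner ratio. Throughout I take $G$ to be connected (in the applications it is a tree). The first observation I would make is that then $|A_n|\to\infty$: in a connected infinite graph every finite non-empty vertex set has at least one vertex with a neighbour outside it, so $|\partial A_n|\ge 1$, and together with $|\partial A_n|/|A_n|\to 0$ this forces $|A_n|\to\infty$.

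Next I would fix a vertex $o\in V(G)$ and, for $r\ge 1$, let $B_r$ be the subgraph of $G$ induced on the ball of radius $r$ about $o$. Since $G$ is connected and locally finite, each $B_r$ is finite and $\bigcup_{r\ge 1}V(B_r)=V(G)$; these balls are the patches. The one estimate worth isolating is: for finite subgraphs $A$ and $C$, writing $A\cup C$ for the subgraph of $G$ induced on $V(A)\cup V(C)$, one has $\partial(A\cup C)\subseteq\partial A\cup V(C)$. (A boundary vertex of $A\cup C$ not lying in $V(C)$ lies in $V(A)$ and has a neighbour outside $V(A)\cup V(C)\supseteq V(A)$, hence lies in $\partial A$.) In particular $|\partial(A\cup C)|\le|\partial A|+|V(C)|$, while trivially $|A\cup C|\ge|A|$.

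With this, the construction is recursive. Set $A'_1:=B_1$; having built the finite subgraph $A'_{n-1}$, let $D_n:=A'_{n-1}\cup B_n$ (finite) and use $|A_m|\to\infty$ together with the F{\o}lner property to pick an index $m(n)$ with $|\partial A_{m(n)}|/|A_{m(n)}|<\tfrac1{2n}$ and $|A_{m(n)}|>2n\,|V(D_n)|$; then put $A'_n:=A_{m(n)}\cup D_n$. By the estimate above,
\[
\frac{|\partial A'_n|}{|A'_n|}\le\frac{|\partial A_{m(n)}|+|V(D_n)|}{|A_{m(n)}|}<\frac1{2n}+\frac1{2n}=\frac1n\longrightarrow 0.
\]
Hence $(A'_n)$ is a F{\o}lner sequence; by construction it is increasing and $V(A'_n)\supseteq V(B_n)$, so $\bigcup_n V(A'_n)=V(G)$, i.e. $\bigcup_n A'_n=G$.

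I do not anticipate a real obstacle here: the argument is essentially bookkeeping once the two elementary facts — that $|A_n|\to\infty$ (which is exactly where connectedness and infiniteness of $G$ are used) and the inclusion $\partial(A\cup C)\subseteq\partial A\cup V(C)$ — are in place. The only mild subtlety is to adjoin the patches to a subsequence whose \emph{cardinalities}, not merely whose boundary ratios, tend to infinity.
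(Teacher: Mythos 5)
Your proof is correct and is essentially the paper's argument: both adjoin an exhausting sequence of finite patches $B_n$ to far-out members of the F{\o}lner sequence and use the estimate $|\partial(A\cup C)|\le|\partial A|+|V(C)|$ together with $|A\cup C|\ge|A|$, choosing the index so that $|V(C)|/|A|$ is small. You are somewhat more careful than the paper in verifying that $|A_n|\to\infty$ (which is where connectedness and infiniteness of $G$ enter, a hypothesis the paper leaves implicit) and in making the new sequence increasing, but the mechanism is the same.
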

\begin{proof}
Let $B_n$ be a sequence of finite subgraphs such that their union is the whole  graph $G$.
By induction, define $A'_n=B_n\cup A_{k_n}$, where $\sqrt{|A_{k_n}|}\geq |B_n|$.  Then
$$
\frac{|\p A'_n|}{|A'_n|}\leq\frac{|B_n|+|\p A_{k_n}|}{|A_{k_n}|}\to0
$$
as $n$ goes to infinity.
\end{proof}
We can now give the definition of amenability for countable groups.
\begin{definition}
A countable group $G$ is called amenable whenever its Cayley graph admits a F{\o}lner set, i.e., there exists a sequence of finite subsets
$(A_n)_{n\geq1}$ such that
$$
\lim_n\frac{|gA_n\triangle A_n|}{|A_n|}=0
$$
for every $g\in G$, where $gA_n=\{ga\ :\ a\in A_n\}$ and $A_n\Delta gA_n$ is the symmetric difference of two sets $A_n$  and $gA_n$.
\end{definition}
Let $G$ be a finitely generated group with a symmetric finite set $\Sigma$.
The Cayley graph of the group $G$ is a graph whose vertices are elements  of $G$ and
two vertices of $g_1$ and $g_2$ are connected if there is an element $s\in \Sigma$ such that
$g_1s=g_2$. A finite generated group $G$ is thus amenable if and only if its Cayley graph is amenable.

\begin{proposition}\label{regular tree}
Let $T$ be a tree which does not have any vertex with degree less than 3.
Then for every finite subtree $A$, we have $|A|\leq 2|\p A|$. Consequently,
$T$ is not amenable.
\end{proposition}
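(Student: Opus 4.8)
The plan is to compare the size of a finite subtree $A$ with the size of its boundary by a double-counting of edges, using that every \emph{interior} vertex of $A$ inherits degree at least $3$ from $T$. First I would dispose of the trivial case $|A|=1$: the lone vertex of $A$ has degree at least $3$ in $T$, hence a neighbour outside $A$, so $\partial A = A$ and $|A| = 1 \le 2 = 2|\partial A|$. So from now on assume $|A|\ge 2$, and recall that a subtree of a tree is connected, so $|E(A)| = |V(A)| - 1$.

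Write $V(A) = I \sqcup B$ with $B = \partial A$ and $I = V(A)\setminus B$. The key point is that, by definition of the boundary, a vertex $v \in I$ has no neighbour in $V(A)^c$; therefore all $\deg_T(v)\ge 3$ of its $T$-neighbours already lie in $A$, so $\deg_A(v)\ge 3$. On the other hand, since $A$ is connected with at least two vertices, $\deg_A(v)\ge 1$ for every $v\in B$. Applying the handshake identity in the finite tree $A$ gives $\sum_{v\in V(A)}\deg_A(v) = 2|E(A)| = 2(|A|-1)$, and splitting the sum over $I$ and $B$ yields $2(|A|-1)\ge 3|I| + |B| = 3(|A|-|B|)+|B| = 3|A| - 2|B|$. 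Rearranging, $|A|\le 2|B| - 2 \le 2|\partial A|$, which is the asserted bound.

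For the consequence, note that for any non-empty finite subgraph $A$ of $T$ (not necessarily connected), each connected component is a finite subtree, so summing the bound over components gives $|A|\le 2|\partial A|$ for all such $A$ (this also follows from the reduction lemma above). Hence $|\partial A|/|A| \ge 1/2$ for every non-empty finite $A$, so $i(T)\ge \tfrac12 > 0$ and $T$ is not amenable.

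I do not expect a real obstacle here; the argument is a one-line edge count. The only places that need a touch of care are the degenerate single-vertex case (where the inequality $\deg_A(v)\ge 1$ on the boundary fails and must be handled by hand) and the passage from finite subtrees to arbitrary finite subgraphs in deriving non-amenability.
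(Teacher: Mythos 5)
Your proof is correct and follows essentially the same route as the paper's: both rest on the handshake identity $2|E(A)|=2(|A|-1)$ for a finite subtree together with the observation that any vertex whose degree in $A$ is at most $2$ must lie in $\partial A$ (equivalently, interior vertices inherit degree at least $3$). Your bookkeeping via the partition $V(A)=I\sqcup\partial A$ in place of the paper's degree classes $K_i$, and your explicit handling of the single-vertex and disconnected cases, are only cosmetic refinements of the same count.
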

\begin{proof}
Let $A$ be a finite subtree of $T$. Define $K_i=\{v\in A\ :\ \mbox{deg}v=i\ \}$. Hence, the boundary of $A$ at least includes the vertices with degree 1 and 2, hence $|\p A|\geq |K_1|+|K_2|$. It is well-know that
\begin{equation}\label{ed}
2|E(A)|= \sum_{v \in V(G)} deg(v) = \sum_{i \ge 1}i|K_i|
\end{equation}
On the other hand
\begin{equation}\label{ver}
|V(A)|= \sum_{i \ge 1} |K_i|.
\end{equation}
Since $A$ is a tree, $|E(G)|=|V(A)|-1$. Equalities \eqref{ed} and \eqref{ver} now imply
$$ |K_1|=\sum_{i\geq 3}(i-2)|K_i|+2\;.
$$
We now have $2|\partial A| \ge |K_1|+|K_2|+ \sum_{i\geq 3}(i-2)|K_i| \ge \sum_{j\geq 1}|K_j|$.
\end{proof}

As mentioned, Woess \cite[p.~114]{Woess2000} classified amenable trees without any leaves.
Here we provide an alternative proof for this result.

\begin{theorem}\cite[p.~114]{Woess2000}\label{thesis}
Let $T$ be an infinite tree with no leaves. Then $T$ is amenable if and only if $T$
contains  arbitrarily long paths without any branch.
\end{theorem}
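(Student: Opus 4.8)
The plan is to prove both directions directly, using only elementary counting on subtrees together with Proposition~\ref{regular tree}. For the easy direction, suppose $T$ contains arbitrarily long paths without branches; since $T$ has no leaves, each such path has all internal vertices of degree exactly $2$, and a path $P_n$ on $n$ vertices satisfies $|\partial P_n| \le 2$ (only the two endpoints can have neighbors outside $P_n$). Hence $|\partial P_n|/|P_n| \le 2/n \to 0$, so $(P_n)$ is a F{\o}lner sequence and $T$ is amenable. This half requires essentially no work.

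For the harder direction, I would argue the contrapositive: assume $T$ has no leaves and does \emph{not} contain arbitrarily long branch-free paths, i.e.\ there is a constant $M$ bounding the length of every path whose interior vertices all have degree $2$; I then want to show $i(T)>0$. Let $A$ be an arbitrary finite subtree of $T$. As in Proposition~\ref{regular tree}, put $K_i=\{v\in A : \deg_T v = i\}$; since $T$ has no leaves, every degree-$1$ vertex of $A$ lies in $\partial A$, and likewise every degree-$2$ vertex of $A$ that has its $T$-neighbor outside $A$ lies in $\partial A$. The new feature compared to Proposition~\ref{regular tree} is the presence of degree-$2$ vertices that are \emph{not} on the boundary. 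The key point is that, because branch-free paths in $T$ have length $\le M$, such interior degree-$2$ vertices occur only in short runs: between any two consecutive branches (vertices of $A$ with $T$-degree $\ge 3$) along a path in $A$, there are at most $M$ vertices of degree $2$.

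Concretely, I would contract each maximal run of degree-$2$ vertices of $A$ to obtain a tree $A'$ whose vertices are the branches of $A$ together with the leaves of $A$; then $|A| \le (M+1)\,|A'|$, while $\partial A \supseteq L(A)$ and (after checking that contraction does not decrease the boundary) $|\partial A'| \le |\partial A| + (\text{bounded correction})$. Applying the inequality from Proposition~\ref{regular tree} to the tree $A'$, whose non-leaf vertices all have degree $\ge 3$, gives $|A'| \le 2|\partial A'|$, and combining the two estimates yields $|A| \le c(M)\,|\partial A|$ for a constant $c(M)$ depending only on $M$. Hence $i(T) \ge 1/c(M) > 0$ and $T$ is not amenable, completing the contrapositive. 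The main obstacle is the bookkeeping in the contraction step: one must be careful that a maximal degree-$2$ run can be incident to $\partial A$ at its ends, that an isolated edge or a short component of $A$ is handled correctly, and that the comparison $|\partial A'| \lesssim |\partial A|$ genuinely holds (a degree-$2$ vertex of $A$ adjacent to a $T$-vertex outside $A$ should be charged to a nearby branch or leaf). Once this linear comparison between $A$ and $A'$ is set up cleanly, the argument reduces to the already-proved Proposition~\ref{regular tree}.
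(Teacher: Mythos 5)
Your proposal is correct and follows essentially the same route as the paper: the forward direction uses the long branch-free paths themselves as F{\o}lner sets, and the converse contracts the (uniformly short) runs of degree-$2$ vertices so as to reduce to Proposition~\ref{regular tree} --- exactly as in the paper, which performs the contraction globally on $T$ to form a tree $T'$ of minimum degree $3$ and then compares each finite subtree $A$ with its image $A'$ in $T'$, rather than contracting each $A$ separately. The bookkeeping you flag (the boundary comparison and the bound $|A|\le c(M)\,|A'|$) is precisely the bookkeeping the paper's own proof also leaves implicit.
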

\proof Let $T$ contain arbitrary long paths without branches. For each $n$, let
$A_n$ be a finite subtree  of $T$ with exactly $n$ vertices with degree 2.
Hence, $|A_n|=n+1$ and $|\p A_n|\leq2$. Consequently, $(A_n)$ is a F{\o}lner set.

Assume $T$ does not contain arbitrarily long paths without branches.
Let $T'$ be a tree obtained after removing all vertices of $T$ whose degrees are 2.
Since, $T$ does not have any leaves, by Proposition~\ref{regular tree}, $T'$ is not amenable.
If $A$ is a finite subtree of $T$, then
corresponding subtree $A'$ in $T'$  has the same boundary as $A'$ and clearly
$|A'|\leq |A|$. In addition, each edge of $A'$ is obtained by removing at most $d$ vertices,
where $d$ is the longest path without any branch.
In other words, $|A'|\leq d|A|$, and
\begin{equation}\label{sandewich}
\frac{|\p A'|}{|A'|}\leq\frac{|\p A|}{|A|}\leq d\frac{|\p A'|}{|A'|}.
\end{equation}
Combining the preceding inequalities and the fact $T'$ is not amenable imply
non-amenability of $T$.

The preceding theorem is not true if the trees are allowed to have infinitely many leaves, as the following example shows.

\begin{example}\label{amenable trees with leaves}
Let $T$ be a tree which can be obtained by attaching one vertex and one edge to the
Cayley graph of $\Bbb Z$ with respect to the generating set $\{-1,1\}$ (see, Figure~I).
Then $T$ is amenable, but $T$ does not contain arbitrarily long paths without any branch.

\end{example}
\begin{center}
\begin{figure}[ht]
\begin{tikzpicture} 
\tikzstyle{solid node}=[circle,draw,inner sep=1.5,fill=black]
\node[solid node]{}
child[grow=up]{node[solid node]{}}
child[grow=left]{node{}}
child[grow=right]{node[solid node]{}
  child[grow=up]{node[solid node]{}
  }
     child[grow=right]{node[solid node]{}}
     child[grow=right]{node[solid node]{}
     child[grow=up]{node[solid node]{}}
     child[grow=right]{node[solid node]{}
     child[grow=right]{node{}}
     child[grow=up]{node[solid node]{}
                }
  }
}
}
;
\end{tikzpicture}
\caption{Example~\ref{amenable trees with leaves}}
\end{figure}
\end{center}

\begin{remark}
One has to note that some properties of amenability for graphs may diverge from amenability of groups.
For instance, it is known that every subgroup of a (discrete) amenable group is amenable. The analogous property does
not hold for trees. This can be easily seen as follows: let $T$ be a $3$-regular tree and $T'$ be the tree obtained by
adding an infinite ray $Z$ (the graph with vertices $1,2, \dots$ where $i$ and $j$ are adjacent when $|i-j|=1$) to one of the vertices of $T$. In other words, let $T'$ be the graph obtained by taking the disjoint union of $T$ and $Z$ and identifying vertex $1$ of $Z$ with one of the vertices of $T$.
Clearly $T'$ contains arbitrary long paths without any branch and is hence
amenable, but it contains $T$ as a subtree which is not amenable. One can modify this example to construct an example
of a tree $T'$ and a subtree $T$ with the same set of ends as $T$ such that $T'$ is amenable, but $T$ is not.
\end{remark}

\section{Trimming and inessential Subtrees}\label{trim}
In this section, we will define certain operators on the space of trees. These definitions will later
be used to give a criterion for amenability of trees.

\begin{definition} Let $T$ be an infinite tree. The trimming operator $\Theta(T)$ is defined by
$$\Theta(T)= T[V(T) \setminus L(T)].$$
In other words, $\Theta(T)$ is the tree obtained by removing
all the leaves of $T$ together with their incident edges.

\begin{center}
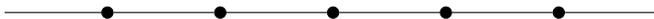
\begin{figure}[ht]
\begin{tikzpicture}
\tikzstyle{solid node}=[circle,draw,inner sep=1.5,fill=black]
\node[solid node]{}
child[grow=left]{node[solid node]{}
 child[grow=left]{node{}}}
child[grow=right]{node[solid node]{}
  child[grow=right]{node[solid node]{}
  child[grow=right]{node[solid node]{}
  child[grow=right]{node{}}
   }
  }
};
\end{tikzpicture}
\caption{Example~\ref{amenable trees with leaves} after trimming}
\end{figure}
\end{center}
\begin{remark}
We will always view $ \Theta(T)$ as an induced subtree of $T$. For instance,
$T$ has no leaves iff $\Theta(T)=T$. Note that $$T \supset \Theta(T) \supset \Theta^2(T)
\supset \cdots $$ is a decreasing sequence of subtrees of $T$. Also define, $\Theta^0(T)=T$ and $\Theta^{k+1}(T)= \Theta( \Theta^k(T))$, for $k \ge 0$.
\end{remark}

We say that the trimming stops in finite time if this sequence stabilizes at some point,
i.e., if there exists $k \ge 0$ such that $\Theta^k(T)$ does not have any leaves, which is
equivalent to $\Theta^{l}(T)= \Theta^{k}(T)$ for all $l \ge k$.
Otherwise, we say that $T$ can be trimmed indefinitely.

\begin{example}\label{trimmed indefinitely}
Although $\Theta$ is defined on the space of trees, it also induces a map on the space $\mathcal{T}$ of isomorphism classes of trees. Let us denote the isomorphism class of a tree $T$ by $[T]$.
We can now study the
dynamics of $\Theta$ on $\mathcal{T}$ and pose various questions about it. For instance, finite trees $[T]$ are exactly those trees whose orbit contains the empty tree (the tree with one vertex and no edge). For a more
interesting example, let $T$ be the tree with $V(T)=\{ (i,j) \in \ZZ^{2}: 0 \le j \le i \}$, and the edges
between $(i,0), (i+1,0)$ for $i \ge 0$ and $(i,j)$ and $(i,j+1)$ for every $i \ge 1$ and $0\le j \le i-1$, see  Figure~3.
It is easy to see that $T$ can be trimmed indefinitely, while $\Theta(T)$ is isomorphic to $T$. In other words, $[T]$
is a fixed point for $\Theta$. Clearly, if $L(T) =\emptyset$, then $\Theta(T)=T$. It is an
interesting question to characterize those trees with $L(T) \neq \emptyset$, for which $\Theta([T])=[T].$
One can analogously define for any $n \ge 1$ the tree $T_n$ by
\[  V(T_n)=\{ (i,j) \in \ZZ^{2}: 0 \le j \le ni \}, \]
with the edges similar to those of $T$ above. In this case, one can see that $[T_n]$ is a periodic point of $\Theta$ with
the smallest period $n$.
\end{example}
\begin{center}
\begin{figure}[ht]\label{graph trimmed indefinitely}
\begin{tikzpicture}
\tikzstyle{solid node}=[circle,draw,inner sep=1.5,fill=black]
\node[solid node]{}
  child[grow=right]{node[solid node]{}
       child[grow=up]{node[solid node]{}}
  child[grow=right]{node[solid node]{}
       child[grow=up]{node[solid node]{}
         child[grow=up]{node[solid node]{}}
         }
         child[grow=right]{node[solid node]{}
             child[grow=right]{node[solid node]{}
               child[grow=right]{node{}}
               child[grow=up]{node[solid node]{}
               child[grow=up]{node[solid node]{}
               child[grow=up]{node[solid node]{}
               child[grow=up]{node[solid node]{}}
               }
               }
               }
             }
          child[grow=up]{node[solid node]{}
            child[grow=up]{node[solid node]{}
            child[grow=up]{node[solid node]{}}
            }
            }
            }
            }
    }
  ;
\end{tikzpicture}
 \caption{Example~\ref{trimmed indefinitely}}
 \end{figure}
\end{center}
We will now define the notion of inessential subtree and use it to prove some of properties
of finitely trimable trees.

\begin{definition}
Let $T_{0}$ be a finite (connected) subtree of an infinite tree $T$ with at least one edge. We say that $T_{0}$ is inessential if the edge-induced subgraph $G(E(T) \setminus E(T_{0}))$ is connected, and hence is a tree.
\end{definition}

For an inessential subtree $T_{0}$ of $T$, set $\overline{T_{0}}=G(E(T) \setminus E(T_{0}))$, and note that $E(G)$ is the disjoint union of the edge set
of the trees $T_{0}$ and $\overline{T_{0}}$. This implies that $T_{0}$ and $\overline{T_{0}}$ have exactly one vertex in common.
We call this vertex the root of $T_{0}$ and denote it by $\rac(T_{0})$. Note that since $\rac(T_{0})$ is adjacent to a vertex in $T_{0}$, as well as one in $\overline{T_{0}}$, its degree is at least $2$. Let us derive some immediate consequences of this definition.

\begin{proposition}
If $T_{1}$ and $T_{2}$ are inessential subtrees of an infinite tree $T$ with $\rac(T_{1})=\rac(T_{2})$,
then $T_{1} \cup T_{2}$ is also an inessential subtree of $T$.
\end{proposition}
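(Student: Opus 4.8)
The plan is to verify the defining property of an inessential subtree directly for $T_1 \cup T_2$, namely that removing its edges leaves a connected graph. First I would observe that since $\rac(T_1) = \rac(T_2) =: r$ is the unique vertex that $T_i$ shares with $\overline{T_i}$, and since $T$ is a tree, the subtrees $T_1$ and $T_2$ can intersect only at $r$ itself: any other common vertex $v$ would give two distinct paths from $v$ to $r$ inside $T$ (unless they coincide, but then that path lies in both $T_i$ and the common part would not be a single vertex), contradicting uniqueness of paths. Hence $T_1 \cap T_2 = \{r\}$, so $T_1 \cup T_2$ is connected, contains $r$, has at least one edge, and is finite; it is a subtree of $T$ since any subgraph of a tree that is connected is itself a tree. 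This establishes that $T_1 \cup T_2$ is a legitimate candidate.

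Next I would identify $\overline{T_1 \cup T_2}$ with $\overline{T_1} \cap \overline{T_2}$ at the level of edge sets: $E(T) \setminus E(T_1 \cup T_2) = E(T) \setminus (E(T_1) \cup E(T_2)) = (E(T)\setminus E(T_1)) \cap (E(T)\setminus E(T_2))$. So the goal reduces to showing that the edge-induced subgraph on this intersection is connected. Here the key step is to use that $\overline{T_1}$ is a tree (by hypothesis, since $T_1$ is inessential) and that $T_2$, viewed inside $\overline{T_1}$, is again an inessential subtree of $\overline{T_1}$: indeed $T_2 \subseteq \overline{T_1}$ because $T_2$ shares only $r$ with $\overline{T_2}$ and $T_1 \cap T_2 = \{r\}$ forces every edge of $T_2$ to lie outside $T_1$, and removing $E(T_2)$ from the tree $\overline{T_1}$ gives exactly $G(E(T)\setminus(E(T_1)\cup E(T_2)))$, whose connectedness is what we want. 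So I would argue that an inessential subtree of an inessential subtree's complement is again inessential, and apply the defining property of $T_2$ relative to the tree $\overline{T_1}$ rather than relative to $T$.

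The one subtlety to check carefully — and what I expect to be the main obstacle — is the claim that $T_2$ actually is a subtree of $\overline{T_1}$ and that it qualifies as inessential \emph{within} $\overline{T_1}$, i.e. that $G(E(\overline{T_1}) \setminus E(T_2))$ is connected. This is where one must be careful about vertices rather than just edges: $T_2$ and $\overline{T_1}$ both contain $r$, and one needs that $\overline{T_2} \cap \overline{T_1}$ meets $T_2$ only at $r$. Since $\overline{T_2} \subseteq \overline{T_1}$ would follow from $E(T_1) \subseteq E(T_1) \cup E(T_2) = E(T_1 \cup T_2)$ reversed — more precisely from $E(\overline{T_2}) = E(T)\setminus E(T_2) \supseteq E(T)\setminus(E(T_1)\cup E(T_2))$ — the intersection $\overline{T_1} \cap T_2$ has edge set empty and is therefore just the single shared vertex $r$. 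Thus removing $E(T_2)$ from the tree $\overline{T_1}$ is exactly the operation that defines inessentiality, and connectedness of the result is guaranteed. Assembling these observations gives that $G(E(T)\setminus E(T_1 \cup T_2)) = \overline{T_1 \cup T_2}$ is connected, so $T_1 \cup T_2$ is inessential, with $\rac(T_1 \cup T_2) = r$.
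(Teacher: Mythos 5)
Your argument rests on the claim that $T_1 \cap T_2 = \{\rac(T_1)\}$, and this claim is false: two inessential subtrees with the same root can share edges, and one can even contain the other. For a concrete counterexample, let $T$ consist of an infinite ray attached at a vertex $r$, together with a hanging path $r, a, b$ and a pendant edge $rc$. Then $T_1$ with edge set $\{ra, ab\}$ and $T_2$ with edge set $\{ra, ab, rc\}$ are both inessential with root $r$, yet $T_1 \cap T_2 = T_1$ has three vertices. Your justification conflates ``$T_i$ meets $\overline{T_i}$ in a single vertex'' (true, by definition of the root) with ``$T_1$ meets $T_2$ in a single vertex'' (not implied): if $v$ is a common vertex of $T_1$ and $T_2$, the unique $T$-path from $v$ to $r$ simply lies in both subtrees, and no contradiction arises. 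The error propagates through the rest of the proof: the assertion $T_2 \subseteq \overline{T_1}$, equivalently $E(T_1)\cap E(T_2)=\emptyset$, fails in the example above, so the reduction ``$T_2$ is an inessential subtree of the tree $\overline{T_1}$'' is unavailable and the second half of the argument collapses. (The passage deducing $\overline{T_2}\subseteq\overline{T_1}$ from $E(T)\setminus E(T_2)\supseteq E(T)\setminus(E(T_1)\cup E(T_2))$ is also not a correct inference, but that is secondary.)

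The statement is true, and the identity you wrote down in your second paragraph is in fact all that is needed: $\overline{T_1\cup T_2}$ has edge set $\bigl(E(T)\setminus E(T_1)\bigr)\cap\bigl(E(T)\setminus E(T_2)\bigr)$, so it is the intersection $\overline{T_1}\cap\overline{T_2}$ of two subtrees of $T$ that share the vertex $r=\rac(T_1)=\rac(T_2)$. The intersection of two subtrees of a tree having a common vertex is connected, since for any $u,v$ in the intersection the unique $T$-path from $u$ to $v$ lies in each of the two subtrees. This one-line Helly-type observation is the paper's proof and requires no disjointness of $T_1$ and $T_2$. If you insist on an iterated-removal argument, you would have to remove from the tree $\overline{T_1}$ only the edges $E(T_2)\setminus E(T_1)$ and then argue that this removal preserves connectedness, which is strictly more delicate than the intersection argument.
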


\begin{proof}
Since $T_{1}$ and $T_{2}$ have a common vertex, $T_{1} \cup T_{2}$ is a connected, and hence a subtree of $T$.
Also $ \overline{T_{1} \cup T_{2}}=   \overline{T_{1}} \cap \overline{T_{2}} $ is the intersection of two
trees that have a common vertex $\rac(T_{1})=\rac(T_{2})$, and is hence connected.
\end{proof}

\begin{proposition}\label{crit}
An infinite tree has an inessential subtree if and only if it has a leaf.

\end{proposition}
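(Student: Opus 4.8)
The plan is to prove both implications directly from the definition of an inessential subtree, with no extra machinery needed.

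First I would handle the ``only if'' direction. Suppose $T_{0}$ is an inessential subtree of $T$. Since $T_{0}$ is a finite tree with at least one edge, it has at least two vertices and hence at least two leaves (with respect to $T_{0}$). As $\rac(T_{0})$ is a single vertex, I can pick a leaf $v$ of $T_{0}$ with $v \neq \rac(T_{0})$. I claim that $v$ is a leaf of $T$. Indeed, as observed in the text, $E(T)$ is the disjoint union of $E(T_{0})$ and $E(\overline{T_{0}})$, and $v$ is incident to exactly one edge of $T_{0}$. Moreover $v \notin V(\overline{T_{0}})$: the subtrees $T_{0}$ and $\overline{T_{0}}$ meet only in $\rac(T_{0})$, and $v \neq \rac(T_{0})$, so $v$ is not an endpoint of any edge in $E(\overline{T_{0}})$. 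Hence $\deg_{T}(v) = 1$, so $T$ has a leaf.

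For the ``if'' direction, suppose $v \in L(T)$ and let $u$ be its unique neighbour. Take $T_{0}$ to be the subtree with vertex set $\{u,v\}$ and the single edge $uv$; this is a finite connected subtree with one edge. Since $T$ is infinite, $E(T) \setminus E(T_{0})$ is non-empty, so $\overline{T_{0}} = G(E(T)\setminus E(T_{0}))$ is defined. Its vertex set is exactly $V(T) \setminus \{v\}$: every vertex other than $v$ is still incident to some edge outside $E(T_{0})$ (note $\deg_{T}(u) \geq 2$, as otherwise $T = \{u,v\}$ would be finite), while $v$ was incident only to $uv$. Removing a leaf from a tree yields a tree, so $\overline{T_{0}}$ is connected, and therefore $T_{0}$ is an inessential subtree of $T$ with $\rac(T_{0}) = u$.

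I do not expect a genuine obstacle here: the proof is elementary. The only two points needing a moment's care are (i) that a finite tree with at least one edge has at least two leaves, which is what lets me choose the leaf $v$ of $T_{0}$ away from $\rac(T_{0})$, and (ii) the identification of $V(\overline{T_{0}})$ with $V(T) \setminus \{v\}$ in the converse, which relies on $v$ being incident to a single edge of $T$.
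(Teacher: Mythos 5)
Your proof is correct, and in the nontrivial direction it takes a cleaner route than the paper. The paper also starts from the fact that the finite tree $T_{0}$ has at least two leaves, but then argues by contradiction: it supposes two leaves $u'$, $v'$ of $T_{0}$ both have neighbours outside $V(T_{0})$ and derives a contradiction by comparing the unique $T$-path between those neighbours with the path joining them inside $\overline{T_{0}}$, concluding that $\overline{T_{0}}$ (or $T_0$) could not be connected. You instead exploit directly the observation, recorded in the text just before the proposition, that $V(T_{0}) \cap V(\overline{T_{0}}) = \{\rac(T_{0})\}$: any leaf $v$ of $T_{0}$ other than the root lies in no edge of $\overline{T_{0}}$, and since $E(T)$ is the disjoint union of $E(T_{0})$ and $E(\overline{T_{0}})$, the degree of $v$ in $T$ equals its degree in $T_{0}$, namely $1$. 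This is a direct argument, avoids the path-chasing, and actually proves the slightly stronger statement that \emph{every} non-root leaf of an inessential subtree is a leaf of the ambient tree. The converse direction (a pendant edge at a leaf is inessential) is the same in both proofs; your explicit check that $\overline{T_{0}}$ is the tree obtained by deleting the leaf $v$ is a welcome bit of extra care.
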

\begin{proof}
Let $v$ be a leaf of $T$ and $w$ be the unique vertex of $T$ adjacent to $v$. Then the single edge $vw$
is an inessential subtree of $T$. Conversely, if $T'$ is an inessential subtree of a tree $T$,
then $T'$ has at least two vertices of degree $1$. Call them $u'$ and $v'$. We claim that at least one of
$u'$ and $v'$ has degree $1$ in $T$. If, on the contrary, $u'$ and $v'$ are adjacent to vertices
$u$ and $v$ in $V(T) \setminus V(T')$, respectively, then the unique path from $u$ to $v$ will
contain both $u', v' \in V(T')$. Now consider the path joining $u$ and $v$ in $G(E(T) \setminus E(T'))$. The
unique path between $u'$ and $v'$ in $T$ must contain this path, and will hence depart $T'$. This contradicts
the assumption that $T'$ is connected.
%
%
\end{proof}

\begin{lemma}\label{add}
If $\Theta(T)$ contains an inessential subtree with $k$ vertices, then $T$ contains an inessential tree with at least
$k+1$ vertices.
\end{lemma}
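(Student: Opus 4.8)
The plan is to take an inessential subtree $S$ of $\Theta(T)$ with $k$ vertices and enlarge it to an inessential subtree of $T$ by appending one leaf of $T$. First I would recall that, since $S$ has an edge and $\overline{S} = G(E(\Theta(T)) \setminus E(S))$ is connected in $\Theta(T)$, the set $\partial S$ of vertices of $S$ with a neighbor outside $S$ (taken inside $\Theta(T)$) is a single vertex, namely $\rac(S)$; indeed $S$ and $\overline{S}$ meet in exactly one vertex. So all vertices of $S$ other than $\rac(S)$ are ``interior'' to $S$ as far as $\Theta(T)$ is concerned. The key point is that a vertex $v \neq \rac(S)$ of $S$ which is a leaf of $S$ has $\deg_{\Theta(T)}(v) = 1$, and since $\Theta(T)$ is obtained from $T$ by deleting $L(T)$, the vertex $v$ must have had at least one neighbor in $T$ that was a leaf of $T$ — otherwise $v$ would already be a leaf of $T$ and would have been removed in forming $\Theta(T)$. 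Here I would use that $S$, having at least one edge, has at least two leaves (as a finite tree), so at least one leaf $v$ of $S$ is distinct from $\rac(S)$.

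Fix such a leaf $v$ of $S$ with $v \neq \rac(S)$, and let $u$ be a neighbor of $v$ in $T$ with $u \in L(T)$ (such a $u$ exists by the previous paragraph). Set $S' = S \cup \{uv\}$, i.e. the subtree of $T$ obtained by adding the vertex $u$ and the edge $uv$ to $S$. Then $S'$ is connected, is a finite subtree of $T$ with $k+1$ vertices, and has at least one edge. It remains to check that $S'$ is inessential in $T$, i.e. that $\overline{S'} = G(E(T) \setminus E(S'))$ is connected. I would verify this by noting that the edges of $T$ not in $S'$ are: the edge set of $\overline{S}$ computed inside $\Theta(T)$ (which is connected by hypothesis), together with all edges of $T$ incident to leaves of $T$ (these are exactly the edges $E(T) \setminus E(\Theta(T))$), minus the single edge $uv$. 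Every edge $xy \in E(T) \setminus E(\Theta(T))$ with $\{x,y\} \neq \{u,v\}$ attaches a leaf $x \in L(T)$ to a vertex $y \in V(\Theta(T))$; since $\overline{S}$ spans $V(\Theta(T))$ (it contains $\rac(S)$ and all of $\overline{S}$, and in fact every vertex of $\Theta(T)$ lies in $\overline{S}$ because $S$ contributes no new interior vertices to the vertex set beyond those of $\Theta(T)$, while $\overline{S}$ contains all vertices of $\Theta(T)$ not strictly inside $S$ — and the strictly-inside vertices of $S$ have degree computed inside $\Theta(T)$), each such pendant edge keeps $\overline{S'}$ connected. The removed edge $uv$ only disconnects $u$, but $u$ is a leaf of $T$ not incident to any other edge of $T$, so $u \notin V(\overline{S'})$ and no harm is done.

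The main obstacle I anticipate is bookkeeping about which vertices of $\Theta(T)$ survive in $\overline{S}$ versus $\overline{S'}$, and making sure that the leaf $v$ of $S$ we choose genuinely picks up a leaf-neighbor in $T$ rather than accidentally equalling $\rac(S)$ — this is exactly why we invoke ``$S$ has $\geq 2$ leaves'' and single out a leaf other than the root. A cleaner route, which I would actually prefer to write, avoids edge-set arithmetic entirely: $S'$ is inessential iff removing the edge set $E(S')$ from $T$ leaves a connected graph on its endpoints, iff in $T$ the subtree $S'$ ``hangs'' from a single vertex $\rac(S')$; and since $S' = S \cup \{uv\}$ with $u$ a leaf of $T$, we have $\rac(S') = \rac(S)$ and the complement $\overline{S'} = \overline{S}$ as subtrees of $T$ after re-inserting the leaf edges of $T$ at the appropriate vertices — concretely, $\overline{S'}$ in $T$ is obtained from $\overline{S}$ in $\Theta(T)$ by re-attaching every leaf of $T$ at its (unique) neighbor, all of which lie in $\overline{S}$. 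Connectivity is then immediate, and we conclude $|V(S')| = |V(S)| + 1 \geq k+1$.
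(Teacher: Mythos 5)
Your overall strategy --- append a pendant leaf-edge of $T$ to the given inessential subtree $S$ of $\Theta(T)$ --- is the same as the paper's, and the first half of your argument (a leaf $v\neq\rac(S)$ of $S$ has degree $1$ in $\Theta(T)$, hence degree at least $2$ in $T$, hence a neighbour $u\in L(T)$) is correct and is exactly the point the paper exploits. The gap is in the verification that $S'=S\cup\{uv\}$ is inessential. You assert that ``$\overline{S}$ spans $V(\Theta(T))$'', i.e.\ that every vertex of $\Theta(T)$ lies in $\overline{S}$. That is false, and you contradict it yourself two sentences earlier: $V(S)\cap V(\overline{S})=\{\rac(S)\}$, so the $k-1\ge 1$ non-root vertices of $S$ do \emph{not} lie in $\overline{S}$. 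Consequently a pendant edge of $T$ attached at such a vertex does real damage. Concretely, let $S$ be the path $r$--$a$--$v$ in $\Theta(T)$ with $r=\rac(S)$, and suppose that in $T$ the interior vertex $a$ has a leaf-neighbour $w_a\in L(T)$. Then $aw_a\in E(T)\setminus E(S')$, but every other edge of $T$ at $a$ lies in $S'$ (its two $\Theta(T)$-edges are in $S$), so the component of $a$ in $G(E(T)\setminus E(S'))$ is $\{a,w_a\}$ together with any further leaves of $a$, and this is disconnected from $\overline{S}$. Hence $S'$ is not inessential, and the same problem defeats your ``cleaner route'': the leaves of $T$ do not all re-attach at vertices of $\overline{S}$.

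The repair is to absorb \emph{all} pendant edges of $T$ hanging at \emph{all} non-root vertices of $S$, not just one edge at one chosen leaf: let $S''$ be $S$ together with every edge $xy\in E(T)$ such that $y\in V(S)\setminus\{\rac(S)\}$ and $x\in L(T)$. Then $S''$ is connected, has at least $k+1$ vertices (your leaf $v$ contributes at least one new vertex $u$), and $G(E(T)\setminus E(S''))$ consists of $\overline{S}$ with pendant edges attached only at vertices of $\overline{S}$, hence is connected. For what it is worth, the paper's own proof also attaches leaves only at the single vertex $v$ and is silent on this point, so your instinct matched the printed argument; but the explicit justification you give rests on a false claim and should be replaced by the ``absorb everything strictly below the root'' version.
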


\begin{proof}
Let $T_0$ be an inessential subtree of $\Theta(T)$ and $v$ be one of its leaves. Since $v$ is not a leaf of $T$, it must
be connected to at least one vertex of $V(T) \setminus V(\Theta(T))$, which is automatically a leaf of $T$. The tree added by adding all such vertices $w$ and the corresponding edges $vw$ to $T_0$ is connected and hence an inessential subtree of $T$ with at least
$k+1$ vertices.
\end{proof}

\begin{proposition}\label{indef}
Let $T$ be an infinite tree that can be trimmed indefinitely. Then $T$ is amenable.
\end{proposition}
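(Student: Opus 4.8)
The plan is to build a Følner sequence directly from the infinite descending chain of trimmings. Since $T$ can be trimmed indefinitely, for every $k\ge 1$ the tree $\Theta^{k-1}(T)$ has a leaf, so by Proposition~\ref{crit} (applied to $\Theta^{k-1}(T)$) it contains an inessential subtree, which must have at least one edge; starting from such a subtree in a deep trimming and pulling it back up through the chain one level at a time using Lemma~\ref{add}, we gain at least one new vertex per level. Concretely, fix $k$, pick an inessential subtree of $\Theta^{k}(T)$ with at least $2$ vertices, and apply Lemma~\ref{add} exactly $k$ times to obtain an inessential subtree $A_k$ of $T=\Theta^0(T)$ with $|A_k|\ge k+2$. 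Letting $k\to\infty$ gives inessential subtrees of $T$ of unbounded size.

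The second step is to check that a large inessential subtree is a good Følner set. If $T_0$ is an inessential subtree of $T$ with root $r=\rac(T_0)$, then by definition every edge of $T$ not in $E(T_0)$ lies in $\overline{T_0}$, and $T_0\cap\overline{T_0}=\{r\}$; hence the only vertex of $T_0$ that can be adjacent to a vertex outside $T_0$ is $r$ itself (any such edge would be an edge of $\overline{T_0}$ incident to a vertex of $T_0$, forcing that vertex to lie in $T_0\cap\overline{T_0}=\{r\}$). Therefore $|\partial T_0|\le 1$, so
\[
\frac{|\partial A_k|}{|A_k|}\le \frac{1}{k+2}\xrightarrow[k\to\infty]{}0,
\]
and $(A_k)_{k\ge1}$ is a Følner sequence, whence $i(T)=0$ and $T$ is amenable. (These $A_k$ are automatically finite and connected, being inessential subtrees, so no appeal to the connectedness lemma is needed.)

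I expect the only real point requiring care to be the iteration of Lemma~\ref{add}: one must make sure the hypotheses stay valid at each level, i.e. that $\Theta^{j}(T)$ is still infinite and still has a leaf for every $0\le j\le k-1$, so that it genuinely admits an inessential subtree to serve as the starting point, and that ``inessential subtree of $\Theta^{j+1}(T)$'' is the correct input for Lemma~\ref{add} with $T$ replaced by $\Theta^{j}(T)$. Indefinite trimmability gives exactly ``$\Theta^{j}(T)$ has a leaf for all $j$'', and an infinite tree with a leaf stays infinite after removing the (nonempty, but proper) leaf set, so the induction goes through; the bound $|A_k|\ge k+2$ then follows by a trivial induction on the number of applications of Lemma~\ref{add}. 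The rest is the elementary boundary estimate above.
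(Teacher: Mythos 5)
Your proof is correct and follows essentially the same route as the paper: iterate Proposition~\ref{crit} and Lemma~\ref{add} to produce arbitrarily large inessential subtrees, then observe that their boundaries are contained in the single root vertex. The only (cosmetic) difference is that you use the inessential subtree $A_k$ itself as the F{\o}lner set with $|\partial A_k|\le 1$, whereas the paper deletes the root first; your choice is in fact slightly cleaner, since the boundary of the root-deleted set consists of the root's neighbors inside $A_k$, of which there may be more than one.
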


\begin{proof}
We show that $T$ contains an inessential subtree with arbitrarily
large number of vertices. Since $\Theta^k(T)$ contains a leaf, hence it contains an inessential subtree by Proposition~\ref{crit}. Now, a repeated application of Lemma~\ref{add} shows that $T$ contains an inessential tree $T_0$ with at least $k$ vertices. Let $S$ be the set of all vertices of $T_0$ except for the root. It is clear that $|S|= k-1$ and
$ \partial S= \{ \rac(T_{0}) \}$. This shows that
\[ \frac{|\partial S|}{|S|}= \frac{1}{k-1}. \]
Hence, by letting $k \to \infty$, we obtain a sequence of F{\o}lner sets in the tree.
\end{proof}

\begin{lemma}\label{reduce}
Let $T$ be an infinite tree such that $\Theta(T)$ is amenable. Then $T$ is amenable.
\end{lemma}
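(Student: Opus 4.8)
The plan is to pull a F{\o}lner sequence of $\Theta(T)$ back to one of $T$ by reattaching the leaves that trimming removed, using that this reattachment creates no new boundary vertices. Throughout write $\p_T$ and $\p_{\Theta(T)}$ for the boundary operator computed inside $T$ and inside $\Theta(T)$, respectively. Note first that $\Theta(T)$ is itself infinite: every leaf of $T$ is adjacent to a vertex of degree at least $2$ (else $T$ would be finite), hence to a vertex of $\Theta(T)$, so $V(T)=V(\Theta(T))\sqcup L(T)$ with each leaf hanging off $\Theta(T)$, and local finiteness then forbids $\Theta(T)$ from being finite.

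Since $\Theta(T)$ is amenable, fix a F{\o}lner sequence $(B_n)_{n \ge 1}$ of finite subtrees of $\Theta(T)$ (which may be taken connected, by the lemma above), so that $|\p_{\Theta(T)} B_n|/|B_n| \to 0$. For each $n$, define $A_n$ to be the subtree of $T$ induced on $V(B_n)$ together with every leaf of $T$ adjacent to some vertex of $B_n$; that is, $A_n = T[\,V(B_n) \cup \{\, v \in L(T) : N(v) \cap V(B_n) \neq \emptyset \,\}\,]$. Since $T$ is locally finite and $B_n$ is finite, $A_n$ is finite; since every added leaf hangs off a vertex of $B_n$, $A_n$ is connected; and being a subgraph of a tree it is acyclic. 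Thus $A_n$ is a finite subtree of $T$.

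The key step is the identity $\p_T A_n = \p_{\Theta(T)} B_n$. On one hand, each leaf added in forming $A_n$ has its unique $T$-neighbor inside $V(B_n)\subseteq V(A_n)$, so no such leaf lies in $\p_T A_n$; hence $\p_T A_n \subseteq V(B_n)$. On the other hand, fix $w \in V(B_n)$. Since $V(T)=V(\Theta(T))\sqcup L(T)$ and $\Theta(T)$ is an induced subtree of $T$, the $T$-neighbors of $w$ are exactly its $\Theta(T)$-neighbors together with the leaves of $T$ adjacent to $w$, and the latter all lie in $A_n$ by construction; since moreover $V(A_n)\cap V(\Theta(T))=V(B_n)$, the vertex $w$ has a neighbor outside $A_n$ if and only if it has a neighbor in $V(\Theta(T))\setminus V(B_n)$, i.e. $w\in\p_T A_n$ iff $w\in\p_{\Theta(T)}B_n$. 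This proves the identity. Consequently $|\p_T A_n|=|\p_{\Theta(T)}B_n|$ while $|A_n|\ge|B_n|$, so
$$\frac{|\p_T A_n|}{|A_n|}\ \le\ \frac{|\p_{\Theta(T)} B_n|}{|B_n|}\ \longrightarrow\ 0 ,$$
which exhibits $(A_n)_{n\ge1}$ as a F{\o}lner sequence in $T$; hence $T$ is amenable.

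The argument is almost entirely bookkeeping, and the only point that genuinely requires care is the boundary identity — in particular, the assertion that a vertex of $B_n$ which is interior to $\Theta(T)$ stays interior in $A_n$. This is exactly why one must reattach \emph{all} leaf-neighbors of the vertices of $B_n$ and not merely some of them; beyond phrasing this correctly I anticipate no real obstacle.
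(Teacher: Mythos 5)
Your proof is correct and follows the same route as the paper: both take a F{\o}lner set $B$ in $\Theta(T)$, reattach all leaves of $T$ hanging off its vertices to form a finite subtree $A$ of $T$, and observe that $\p_T A=\p_{\Theta(T)}B$ while $|A|\ge|B|$. Your write-up simply makes explicit the boundary identity that the paper asserts in one line.
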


\begin{proof}
Let $S$ be a connected subgraph of $\Theta(T)$ with {$\dfrac{|\p S|}{|S|} < \epsilon$}. Set
$ \overline{S}$ to be the connected subgraph of $T$ obtained by adding all of the leaves of $T$ that are connected to a vertex of $S$. Note that since the only vertices of $T \setminus \Theta(T)$ are leaves of $T$, the boundary of $ \overline{S}$ in $T$ is equal to the boundary of $S$ in $ \Theta(T)$. Since
$| \overline{S}| \ge |S|$, we have $ \dfrac{|\p\overline{S}|}{|\overline S|}\leq\epsilon$.
\end{proof}

\end{definition}

We can now prove the main result of this section which is the generalization of Theorem~\ref{thesis}

\begin{theorem}
Let $T$ be an infinite tree. Then $T$ is amenable if and only if $T$ contains arbitrarily large inessential trees or for some $k \ge 1$, $\Theta^k(T)$ has arbitrarily long paths without branch. Moreover, the former is always the case if $T$ can be trimmed indefinitely.
\end{theorem}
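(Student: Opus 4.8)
I would prove the two implications separately, with nearly all the difficulty in the forward direction, which I would reduce to Theorem~\ref{thesis} by establishing that amenability is inherited along the trimming sequence once inessential subtrees are ruled out. \emph{Sufficiency.} If $T$ has arbitrarily large inessential subtrees, I would argue as in the proof of Proposition~\ref{indef}: for an inessential subtree $T_{0}$, take $A=V(T_{0})$ as a finite connected subgraph; every vertex of $T_{0}$ other than $\rac(T_{0})$ has all of its $T$-neighbours inside $V(T_{0})$, since an outside neighbour would lie in $\overline{T_{0}}$ and hence be the root, while $\rac(T_{0})$ is adjacent to $\overline{T_{0}}$. Thus $\p A=\{\rac(T_{0})\}$ and $|\p A|/|A|=1/|V(T_{0})|$, and letting $|V(T_{0})|\to\infty$ gives a F{\o}lner sequence. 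If instead $\Theta^{k}(T)$ has arbitrarily long paths without branches for some $k\ge 1$, then a path on $n$ vertices of degree $2$ in $\Theta^{k}(T)$ has boundary of size at most $2$, so $\Theta^{k}(T)$ is amenable (this is the first half of the proof of Theorem~\ref{thesis}, which uses nothing about leaves), and $k$ applications of Lemma~\ref{reduce} give that $T$ is amenable. For the ``moreover'' clause, if $T$ can be trimmed indefinitely then each $\Theta^{k}(T)$ has a leaf, hence an inessential subtree (Proposition~\ref{crit}), so iterating Lemma~\ref{add} yields an inessential subtree of $T$ with at least $k$ vertices.

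\emph{Necessity.} Assume $T$ is amenable and that $T$ has \emph{no} arbitrarily large inessential subtrees, so $|V(T_{0})|\le M$ for some $M$ and every inessential $T_{0}$; I must produce $k\ge 1$ with $\Theta^{k}(T)$ having arbitrarily long paths without branches. First, trimming must stabilise, since otherwise every $\Theta^{k}(T)$ has a leaf, hence an inessential subtree, whence (Lemma~\ref{add}) $T$ has one of size $\ge k$, a contradiction; let $k_{0}\ge 0$ be minimal with $T^{\ast}:=\Theta^{k_{0}}(T)$ leaf-free, and note that by Lemma~\ref{add} no $\Theta^{j}(T)$ has arbitrarily large inessential subtrees. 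The central step would be to show amenability descends under $\Theta$ under these hypotheses: a vertex with $m\ge 2$ leaf-neighbours spans an inessential subtree with $m$ vertices, so every vertex of every $\Theta^{j}(T)$ has at most $M$ leaf-neighbours there, and thus if $S$ is an infinite amenable tree with this bound, a connected F{\o}lner sequence $A_{n}$ in $S$ restricts to $A_{n}'=A_{n}\setminus L(S)=A_{n}\cap\Theta(S)$ which is still connected, whose boundary in $\Theta(S)$ lies inside the boundary of $A_{n}$ in $S$ (deleting leaves cannot create boundary), and which satisfies $|A_{n}|\le (M+1)|A_{n}'|$ because each leaf of $S$ in $A_{n}$ attaches to a vertex of $A_{n}'$, at most $M$ per vertex. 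Since $|A_{n}|\to\infty$ (the ambient tree is infinite and connected, so boundaries are non-empty), $A_{n}'$ is eventually non-empty and $|\p A_{n}'|/|A_{n}'|\le (M+1)\,|\p A_{n}|/|A_{n}|\to 0$; applying this $k_{0}$ times, $T^{\ast}$ is amenable.

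It then remains to see $T^{\ast}$ is infinite: a finite leaf-free tree is a single vertex, and if $\Theta^{k_{0}}(T)$ were a point then, the sizes decreasing from $\infty$, some $\Theta^{j+1}(T)$ is finite while $\Theta^{j}(T)$ is infinite, so the infinitely many leaves of $\Theta^{j}(T)$ attach to the finitely many vertices of $\Theta^{j+1}(T)$ and one of these vertices has infinitely many leaf-neighbours, giving arbitrarily large inessential subtrees in $\Theta^{j}(T)$ and, by Lemma~\ref{add}, in $T$ --- a contradiction; also, since $\Theta^{0}(T),\dots,\Theta^{k_{0}}(T)$ all contain $T^{\ast}$, they are then all infinite. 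Hence $T^{\ast}$ is an infinite leaf-free amenable tree, so by Theorem~\ref{thesis} it has arbitrarily long paths without branches; since $\Theta$ fixes leaf-free trees, $\Theta^{k}(T)$ has them for $k=\max(k_{0},1)\ge 1$, which is the desired conclusion. The main obstacle is the descent step --- verifying that the restriction $A_{n}'$ is still F{\o}lner in $\Theta(S)$ --- which is precisely where the ``no arbitrarily large inessential subtrees'' hypothesis enters, through the per-vertex bound on the number of leaf-neighbours.
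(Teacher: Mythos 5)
Your proof is correct and follows essentially the same strategy as the paper: F{\o}lner sets from inessential subtrees and from long paths (pushed down to $T$ via Lemma~\ref{reduce}) for sufficiency, and for necessity a transfer of F{\o}lner sets from $T$ to the leaf-free core $\Theta^{k_0}(T)$ with the loss controlled by the bound on inessential subtrees, followed by an appeal to Theorem~\ref{thesis}. The only real difference is organizational --- you descend one trimming step at a time using a per-vertex bound on the number of leaf-neighbours, whereas the paper compares $A$ with $A\cap V(\Theta^{k}(T))$ in a single step via the global bound $R$ --- and your write-up supplies several details (stabilisation of the trimming sequence, infiniteness of the trimmed core, the $k\ge 1$ bookkeeping, non-emptiness and connectivity of the restricted F{\o}lner sets) that the paper's proof leaves implicit.
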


\begin{proof}
The same argument as in Proposition~\ref{indef} shows that if $T$ contains arbitrarily large inessential trees, then it is amenable. Hence,  without loss of generality, we can assume that there exists $k \ge 1$ such that $\Theta^k(T)$ does not have any leaves but contains arbitrarily long paths. The proof now follows by induction on $k$. For $k=0$, we can take these long paths without branch as F{\o}lner
sets. Since $\Theta^k(T)=\Theta^{k-1}(\Theta(T))$, by induction hypothesis, we obtain that $\Theta(T)$ is amenable. Hence, using Lemma \ref{reduce}, $T$ is amenable.

Let us now prove the converse.
Assume that exists $k \ge 0$ such that $\Theta^{k}(T)$ contains no leaves and does
 not contain arbitrarily long paths, and that the
largest inessential tree of $T$ has cardinality $R$. We will show that $T$ is  nonamenable.

We will assume that $\Theta^{k}(T)= \Theta^{k+1}(T)=T'$, i.e.,
$T$ can only be trimmed $k$ times and $T'$ does not have
arbitrarily long paths.
This implies that $T$ is obtained from $T'$ by adding a (possibly infinite) number of
inessential trees, each attached at a distinct vertex of $T$.

Let us now assume that $A$ is a connected subgraph of $T$.

Let $A'=A\cap V(T')$. We claim that
\[ |A'| \ge \frac{1}{R} |A| \]
Also the cardinality of the boundary of $A'$ in $T'$ is the same
as the cardinality of the boundary of $A$ in $T$. This implies that
 \[ \frac{ | \partial A'|   }{|A'|} \le R \frac{| \partial  A|}{|A|} . \]

We have now reduced the problem to the case that the tree does not have
any vertices of degree $1$.
\end{proof}

\section{Application: Amenability of Random Trees}\label{gw}
In this section, we will consider the question of amenability for Galton-Watson trees. First we will give some basic
definitions regarding the Galton-Watson process. For details, the reader is referred to \cite{AN}.

Let $\pi=(p_n)_{n \ge 0}$ be a distribution on the set of non-negative integers. We will define the Galton-Watson
process associated to $\pi$ as a distribution on the set of rooted labeled trees. We will start by setting up the notations that will be used in this section.

A Galton-Watson tree is always designated with a distinguished vertex refereed to as the
root and denoted by $\rt$. Set
\[ {\mathcal I}= \{ \rt \} \cup \bigcup_{j=1}^{ \infty} \NN^j \]
where $\NN$ is the set of positive integers. For each $I= (i_1, \dots , i_k) \in {\mathcal I}$, we define
its length (or generation) by $|I|=k$. We will also set $| \rt |=0$. A set ${\mathcal J} \subseteq {\mathcal I}$ is called inductive if it satisfies the following properties:
\begin{enumerate}
\item  $ \rt \in {\mathcal J}$
\item For each $k \ge 1$ and $I=(i_1, \dots, i_k) \in {\mathcal J} $, we have
$ \hat{I}:=(i_1, \dots , i_{k-1}) \in {\mathcal J}$.
\item For each $k \ge 1$ and $I=(i_1, \dots, i_k) \in {\mathcal J} $, if $i_k \ge 2$,
then $(i_1, \dots , i_{k-1}, i_k-1) \in {\mathcal J}$
\end{enumerate}

If $\hat{J}=I$, we say that $I$ is an ancestor of $J$, and that $J$ is a descendant of $I$.
Intuitively, an inductive set is a set which is closed with respect to the ancestor operation and moreover, the set of offsprings
of any vertex are always labelled from $1$ to $k$ for some $k \ge 0$.
Let $X_I$, $I \in I$ be a sequence of independent identically distributed random variables with distribution $\pi$. A random rooted tree is constructed as follows: the root
$v_\rt$ has $X_{ \rt}$ direct offsprings (also called children) denoted by $v_{(i_1)}$, for $1 \le i_1 \le X_{\rt}. $ These vertices are called the first generation. From here, the construction continues inductively. Assume that the vertices of generation $\ell$ have been constructed. Each vertex in generation $\ell$ is of the form $v_I$ for some $I=(i_1, \dots, i_{\ell})$, hence $|I|= \ell$. The vertext $v_I$ has $X_I$ children, namely $v_{(i_1, \dots, i_{\ell}, i_{\ell +1})}$, where $i_{ \ell+1}$ ranges from $1$ to $X_I$. It is easy to see that the set of
vertices of the tree ${\mathcal T}$ thus constructed is of the form $V({\mathcal T})=\{v_J: J \in {\mathcal J} \}$, where $ {\mathcal J}$ is an inductive set. We will denote the set of vertices in generation $\ell$ by $ V({\mathcal T})_{\ell}$ and set
$W_{\ell}=| V( {\mathcal T})_{\ell}|$. The rooted subtree of $ {\mathcal T}$ consisting of all the offsprings of vertex $v_I$ rooted at $v_I$ will be denoted by $ {\mathcal T}^I$. The (finite) rooted subtree of $ {\mathcal T}$ consisting of all vertices of the first $k$ generations will be denoted by $ {\mathcal T}_k$.

\begin{center}
\begin{tikzpicture}
  \node (root) {$\rt$}
child child {
      child {coordinate (special)}
child };
  \node[left] at (root-1) {$v_1$};
  \node[right] at (root-2) {$v_2$};
  \node[left] at (special) {$v_{21}$};
  \node[right] at (root-2-2) {$v_{22}$};
\end{tikzpicture}
\end{center}

\begin{remark}
Before we proceed to the proof, a few remarks are in order. There is a vast literature
on the Galton-Watson process. Let $m= \ex{X_I}= \sum_{j=0}^{ \infty} j p_j$ be the expected number of children of any vertex. It is a classical theorem that if $p_1 \neq 1$, then $ {\mathcal T}$ is almost surely an infinite tree iff $m>1$.
If $p_1=1$ then $T$ will be isomorphic to an infinite path and hence amenable. From now on we will always exclude this case. The cases $m=1$ and $m<1$ are usually referred to as the critical and subcritical case. Since every finite tree is by definition amenable, we can condition on the non-extinction of $ {\mathcal T}$.
\end{remark}

\begin{proof}[Proof of Theorem \ref{galton}]
First note that if $p_0=p_1=0$, then $ {\mathcal T}$ is almost surely infinite and the degree of every vertex, except possibly for the root, is at least $3$. Such a tree is clearly non-amenable. We will now show that if $p_0>0$ or $p_1>0$, then the tree is amenable.
Let $m= \sum_{k=1}^{ \infty } k p_k$.
By the above remark, we can assume that $m>1$.
First assume that $m< \infty$. We know that
$\ex{W_n}=m^n$. We will distinguish two cases:

{\it Case 1:} Assume that $p_0=0$ and $0<p_1<1$. We will start by two observations.
First, it is easy to see that in this case
$W_n$ is a non-decreasing sequence. Moreover, $W_{n+1}=W_n$ if each vertex in generation $n$ has exactly one offspring. This
implies that if $k \ge 1$,
$$\pr{W_{n+1}>W_n|W_n=k}= 1-p_1^k \ge 1-p_1>0.$$
Hence $\pr{W_{n+1}>W_n} \ge 1- p_1$, and an application of Borel-Cantelli shows that $\pr{W_n \to \infty}=1$. Second, with probability $q=p_1^{d+1}>0$
each vertex in the the first $d$ generations has exactly one child, i.e.,
the tree $ {\mathcal T}_{d+1}$ is isomorphic to $P_{d+1}$.

For any vertex $v_I$ in the $n^{\text{th}}$ generation, consider the rooted subtree $ {\mathcal T}^I$ at
$v_I$. Note that of ${\mathcal T}^I$ are i.i.d. random trees with the same distribution as $ {\mathcal T}$. By the second observation above,
each ${\mathcal T}^I$ has probability $q$ of being isomorphic to a path of length $d+1$.
Let $A_n$ be the event that at least one of these subtrees is isomorphic to a path of
length $d+1$. For a fixed $r$, we have
\[ \pr{A_n} \ge \pr{A_n|W_n>r}\pr{W_n>r}= (1-(1-q)^r)\pr{W_n>r}\to 1-(1-q)^r, \] as
$n \to \infty$. Since $r$ is arbitrary, we have $\pr{A_n} \to 1$, as $n \to \infty$. This means that with probability $1$, $ {\mathcal T}$ contains a path of length $d+1$ for every $d \ge 1$, which proves the almost sure amenability of $ {\mathcal T}$.

{\it Case 2:}
Let us now consider the case that $p_0>0$. Note that in this case there is no guarantee that $W_n \to  \infty$ as $n \to \infty$.
Fix $ d \ge 1$. We will show that, with probability $1$, the isoperimetric constant of $ {\mathcal T}$ is at most $1/d$. The large F{\o}lner sets in this case arise from the following dichotomy: for an appropriate value of $n \gg 1$: (a) either there are ``many'' vertices in generation $n$, in which case, with high probability, the subtree of ${\mathcal T}$ starting from one of them must terminate exactly after $d$ generations, i.e., the first $d$ generations starting from one of these vertices must be a finite tree with all vertices of degree $1$ in generation $d$, or, (b) there are ``few'' vertices in generation $n$, which implies that the the first $n-1$ generation of the graph forms a large set with a small boundary. Let us make this idea precise.
Fix $d \ge 1$, choose $s>0$ such that $p_s>0$. Let $T_{s,d}$ denote the (deterministic) finite rooted $s$-ary tree of depth $d$, that is, a rooted tree, where starting from root up to generation $d-1$, each vertex has exactly $s$ children, but the vertices in generation $d$ have not any children.  Let $q$ be the probability that $ {\mathcal T}_{d+1}$ is isomorphic $T_{s,d}$. Since $p_0>0, p_s>0$, we have $q>0$.

Let $E$ denote event that $ {\mathcal T}$ is finite, or equivalently, that $W_n=0$ for $n \gg 1$.
It suffices to show that $ {\mathcal T}$ is amenable conditioned on $E^c$. Let $A_d$ denote the event that $ {\mathcal T}$ contains a subtree with isoperimetric constant at most $1/d$. For any $r \ge 1$, choose $n \ge rd$, and note that given that ${\mathcal T}$ is infinite and $W_n>r$, $A_d$ will take place if at least one of the subtrees starting from one of the vertices in generation $d$ is isomorphic to $T_{s,d}$. Since there are at least $r$ vertices in generation $n$, we have
\[ \pr{A_d|E^c \cap \{ W_n>r \} } \ge 1-(1-q)^r. \]

On the other hand, if there are at most $r$ vertices in generation $n$, then the subtree ${\mathcal T}_{n-1}$, which has at least $n$ vertices, has a boundary of size at most $r$, implying that its isoperimetric constant is at most $r/n \le 1/d$. Hence,
\[ \pr{ A_d|E^c \cap \{W_n \le r \} }=1. \]
Combining the two case, we have that for given $r, d\ge 1$,
 \[ \pr{ A_d|E^c  }\ge 1-(1-q)^r.  \]
Since $r$ is arbitrary and $q>0$, we deduce that for any $d \ge 1$, $\pr{ A_d|E^c  }=1$, implying that $ {\mathcal T}$ almost surely has a set with
isoperimetric constant at most $1/d$, for every $d$, proving the almost sure amenability
of $ {\mathcal T}$ in this case.
\end{proof}
\bibliographystyle{alpha}
\bibliography{Ref}
\end{document}